\numberwithin{equation}{section}
\title[Toric Fano manifolds with positive second Chern characters]
{Toric Fano manifolds of dimension at most eight with positive second Chern 
characters}% is isomorphic to the projective space}
\author{Yuji Sano, Hiroshi Sato and Yusuke Suyama} 
\subjclass[2010]{Primary 14M25; Secondary 14C17, 14J45.}
\date{2020/8/27, version 0.08}
\keywords{toric manifolds, Chern character, $2$-Fano manifolds}
\address{Department of Applied Mathematics, Faculty of Sciences, 
Fukuoka University, 8-19-1, Nanakuma, Jonan-ku, Fukuoka 814-0180, Japan}
\email{sanoyuji@fukuoka-u.ac.jp}
\address{Department of Applied Mathematics, Faculty of Sciences, 
Fukuoka University, 8-19-1, Nanakuma, Jonan-ku, Fukuoka 814-0180, Japan}
\email{hirosato@fukuoka-u.ac.jp}
\address{Department of Mathematics, Graduate School of 
Science, Osaka University, Toyonaka, Osaka 560-0043, Japan}
\email{y-suyama@cr.math.sci.osaka-u.ac.jp}
\newcommand{\Pic}[0]{{\operatorname{Pic}}}
\newcommand{\G}[0]{{\operatorname{G}}}
\newtheorem{thm}{Theorem}[section]
\newtheorem{prop}[thm]{Proposition}
\newtheorem{conj}[thm]{Conjecture}
\theoremstyle{definition}
\newtheorem{defn}[thm]{Definition}
\newtheorem{rem}[thm]{Remark}
\newtheorem*{ack}{Acknowledgments}       
\begin{document}
\bibliographystyle{amsalpha+}

\begin{abstract}
We show that any toric Fano manifold of dimension at most eight 
with the positive second Chern character is isomorphic to 
the projective space by using \texttt{polymake}. 
\end{abstract}

\maketitle

\tableofcontents
\section{Introduction} %%%%%%%%%%%%%%%%%%%
\thispagestyle{empty}
Smooth Fano varieties are very important objects in algebraic geometry, 
though the definition is very simple, that is, a smooth projective variety with 
ample anti-canonical divisor. 
By Nakai-Moishezon criterion, this condition implies that the intersection $\mathrm{ch}_{1}(X) \cdot C$ is positive for any curve $C$ on $X$, where $\mathrm{ch}_{1}(X)$ is  the first Chern character of $X$.
Replacing the first Chern character (resp. a curve $C$) by the second Chern character (resp. a surface $S$), the following notion is introduced.

\begin{defn}
A smooth projective variety $X$ over an algebraically closed field $k=\overline{k}$ 
is said to be $\mathrm{ch}_2$-{\em positive} (resp. $\mathrm{ch}_2$-{\em nef}) if 
\begin{equation*}\label{eq:ch2positive}
	\mathrm{ch}_2(X)\cdot S>0\quad (\mbox{resp. }\mathrm{ch}_2(X)\cdot S\ge 0)
\end{equation*}
for any subsurface $S\subset X$, where 
$\mathrm{ch}_2(X)=\frac{1}{2}(\mathrm{c}_1^2-2\mathrm{c}_2)$ is 
the second Chern character of $X$. 
\end{defn}

\noindent
$\mathrm{ch}_2$-nef Fano manifolds are first studied by de Jong and Starr \cite{ds} 
in connection with the existence of rational surfaces on Fano manifolds. 
However, 
only few examples of $\mathrm{ch}_2$-positive manifolds are known.
For instance, the known examples of $\mathrm{ch}_2$-positive smooth projective toric varieties (not necessarily Fano) are only projective spaces (see \cite{sato3} and \cite{satosuyama2}) at the moment.
In this paper, we restrict $X$ to be a toric \textit{Fano} manifold.
Nobili \cite{nobili} and the second author \cite{sato2} proved that 
any $\mathrm{ch}_2$-positive smooth toric Fano $4$-fold is 
isomorphic to $\mathbb{P}^4$. 
The main result of this paper is to classify  $\mathrm{ch}_2$-positive smooth toric Fano $d$-folds for $5\le d\le 8$.
The result is similar as the known results.
\begin{thm}\label{thm:main}
Let $X$ be a smooth toric Fano $d$-fold. If $X$ is $\mathrm{ch}_2$-positive and $d\le 8$, 
then $X$ is isomorphic to the $d$-dimensional projective space $\mathbb{P}^d$. 
\end{thm}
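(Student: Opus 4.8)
The plan is to reduce $\mathrm{ch}_2$-positivity to a purely combinatorial, and in particular finite, condition on the fan, and then to verify that condition against the complete classification of smooth toric Fano $d$-folds.

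First I would rewrite the second Chern character of a smooth complete toric variety $X=X_\Sigma$ in a form adapted to intersection theory. Writing $D_\rho$ for the torus-invariant prime divisor attached to a ray $\rho\in\Sigma(1)$, the total Chern class is $c(X)=\prod_{\rho}(1+D_\rho)$, so $c_1=\sum_\rho D_\rho$ and $c_2=\sum_{\rho<\rho'}D_\rho D_{\rho'}$; substituting these into $\mathrm{ch}_2(X)=\frac12(c_1^2-2c_2)$ the cross terms cancel and one is left with the clean identity
\begin{equation*}
\mathrm{ch}_2(X)=\frac12\sum_{\rho\in\Sigma(1)}D_\rho^2 .
\end{equation*}
On a complete toric variety the cone of effective $2$-cycles is generated by the classes of the torus-invariant surfaces $\V(\tau)$, where $\tau$ runs over the codimension-$2$ cones of $\Sigma$, so every subsurface is numerically a nonnegative combination of these. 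Hence $\mathrm{ch}_2$-positivity is equivalent to the finite list of inequalities $\sum_{\rho}D_\rho^2\cdot\V(\tau)>0$, each term being a self-intersection $D_\rho^2\cdot\V(\tau)=(D_\rho|_{\V(\tau)})^2$ computed on the smooth toric surface $\V(\tau)$.

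Next I would make each inequality explicit. For $\tau=\Cone(v_1,\dots,v_{d-2})$ the surface $S=\V(\tau)$ is the smooth complete toric surface of the star fan of $\tau$; a ray contributes $D_\rho|_S=0$ unless $\rho\in\tau(1)$ or $\tau+\rho\in\Sigma$, and for the adjacent rays $D_\rho|_S$ is exactly the invariant curve $\V(\tau+\rho)$ on $S$. Summing the self-intersections of these curves over a smooth complete toric surface with $m_\tau$ rays gives $12-3m_\tau=2\,\mathrm{ch}_2(S)$, whence
\begin{equation*}
\mathrm{ch}_2(X)\cdot\V(\tau)=\mathrm{ch}_2(\V(\tau))+\frac12\sum_{v\in\tau(1)}(D_v|_{\V(\tau)})^2 .
\end{equation*}
Since $\mathrm{ch}_2(S)=6-\tfrac32 m_\tau\le 0$ for every smooth complete toric surface other than $\mathbb{P}^2$, this already shows $\mathrm{ch}_2$-positivity to be a severe constraint: on each invariant surface the negative intrinsic term must be overcome by the normal-bundle contributions $\sum_{v\in\tau(1)}(D_v|_S)^2$. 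This is the conceptual reason to expect only $\mathbb{P}^d$ to survive, and it also isolates the surfaces worth testing first.

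Finally I would carry out the verification. The smooth toric Fano $d$-folds are classified for every $d\le 8$ and available as a finite database of fans; the case $d=4$ is already settled in \cite{nobili,sato2}, so the new content is $5\le d\le 8$. For each fan I would load it into \texttt{polymake}, enumerate the codimension-$2$ cones, compute the numbers $\mathrm{ch}_2(X)\cdot\V(\tau)$ via the formula above, and check whether they are all strictly positive; the expected outcome is that every $X\not\cong\mathbb{P}^d$ admits at least one invariant surface with $\mathrm{ch}_2(X)\cdot\V(\tau)\le 0$, while $\mathbb{P}^d$ passes. The main obstacle is the size of the search in high dimension---there are on the order of $7\times 10^{5}$ smooth toric Fano $8$-folds---so the computation must be organized efficiently, for instance by pruning with cheap necessary conditions (testing first the surfaces on which $\mathrm{ch}_2(\V(\tau))$ is most negative, or exploiting an extremal contraction of $X$ to exhibit a fiber surface with non-positive $\mathrm{ch}_2$ whenever $\rho_X\ge 2$) before resorting to the exhaustive check.
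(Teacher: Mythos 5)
Your proposal is correct and follows the same overall strategy as the paper: reduce $\mathrm{ch}_2$-positivity to intersection numbers against torus-invariant surfaces (using $\mathrm{ch}_2(X)=\frac12\sum_\rho D_\rho^2$ and the fact that invariant surfaces generate the effective $2$-cycles), then run an exhaustive check over \O bro's classification in \texttt{polymake}. The one genuine difference is in the intersection formula you use. The paper works only with invariant surfaces of Picard number two, via the closed formula of Proposition \ref{intersection} in terms of two wall relations; this forces a separate hand computation (Theorem \ref{voskre}) for the pseudo-symmetric variety $V^d$, which contains no invariant surface of Picard number two. Your identity
\[
\mathrm{ch}_2(X)\cdot\V(\tau)=\mathrm{ch}_2(\V(\tau))+\tfrac12\sum_{v\in\tau(1)}\bigl(D_v|_{\V(\tau)}\bigr)^2,
\qquad \mathrm{ch}_2(\V(\tau))=6-\tfrac32 m_\tau,
\]
is valid for invariant surfaces of arbitrary Picard number, so it subsumes that exceptional case: for $V^d$ the surface $S_\tau\cong S_6$ has $m_\tau=6$ and your formula reproduces the paper's $2\mathrm{ch}_2(V^d)\cdot S_\tau=-6+(2n-2)(-2)$. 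What your route buys is uniformity (no exceptional family to treat by hand, and a conceptual explanation of why only $\mathbb{P}^d$ can survive, since $\mathrm{ch}_2(\V(\tau))\le 0$ once $m_\tau\ge 4$); what it costs is that computing $(D_v|_{\V(\tau)})^2$ for $v\in\tau(1)$ requires assembling the divisor class of $D_v|_S$ from the star of $\tau$, which is more implementation work per surface than the paper's ready-made two-wall formula. Both are finite, feasible computations, so either completes the proof.
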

\noindent
Our classification is owed to the database of smooth reflexive polytopes given by \O bro \cite{obro} and Paffenholz \cite{paffenholz}, and the software called \texttt{polymake} \cite{polymake} for computations relevant to polytopes.

\medskip

This paper is organized as follows: 
In Section \ref{junbi}, we recall the formula to compute the intersection number of 
$\mathrm{ch}_2(X)$ and a torus-invariant subsurface $S$ on $X$ whose Picard number is equal to two.
This formula is implemented as a script in \texttt{polymake} in Section \ref{mainsec}.
Section \ref{pseudosec} is devoted to the calculations of the intersection numbers on 
so-called pseudo-symmetric toric Fano varieties $\widetilde{V}^d$ and $V^d$. 
One of them is the exceptional case we cannot apply the script to.
In Section \ref{mainsec}, we conclude the main result of this paper with the script.

\begin{ack}
The first author was partly supported by JSPS KAKENHI 
Grant Number JP17K05233. 
The second author was partly supported by JSPS KAKENHI 
Grant Number JP18K03262. 
The third author was partly supported 
by JSPS KAKENHI 
Grant Number JP18J00022. 
\end{ack}

\section{The intersection $\mathrm{ch}_{2}(X)\cdot S$ for a subsurface $S$ of Picard number two}\label{junbi} %%%%%%%%%%%%%%%%%%%%%%%%
First, we collect some basic facts of toric geometry which we need. 
For details, see \cite{cls}, \cite{fujino-sato}, \cite{fulton}, \cite{matsuki}, 
\cite{oda} and \cite{reid}. 

Let $X=X_\Sigma$ be the smooth projective toric $d$-fold over an 
algebraically closed field $k=\overline{k}$ associated to a fan $\Sigma$ in 
$N:=\mathbb{Z}^d$. For $\{v_1,\ldots,v_l\}\subset N$, 
$\langle v_1,\ldots,v_l\rangle$ stands for the cone in $N_\mathbb{R}:=N\otimes\mathbb{R}$ 
generated by $v_1,\ldots,v_l$. Let $\G(\Sigma)$ be the set of 
primitive generators of one-dimensional cones in $\Sigma$. It is well-known that 
\[
\mathrm{ch}_2(X)=\frac{1}{2}\sum_{x\in\G(\Sigma)}D_x^2,
\]
where $D_x$ is the torus-invariant prime divisor corresponding to $x\in\G(\Sigma)$. 

For a smooth projective toric variety $X$ and 
a torus-invariant subsurface $S\subset X$ of Picard number one, it is 
well-known that the inequality 
$\mathrm{ch}_2(X)\cdot S>0$ always holds. On the other hand, 
the intersection number of $\mathrm{ch}_2(X)$ and 
any torus-invariant subsurface of Picard number two can be easily 
calculated as follows: %(more precisely, see \cite{sato2}, \cite{satosumi} 
%and \cite{satosuyama1}): 
Let $X=X_\Sigma$ be a smooth projective toric 
$d$-fold, and $S\subset X$ a torus-invariant subsurface 
of Picard number two. 
Let $\tau\in\Sigma$ be the $(d-2)$-dimensional cone 
associated to $S$ and 
$\tau\cap\G(\Sigma)=\{x_1,\ldots,x_{d-2}\}$. 
There exist exactly four maximal cones 
\[
\tau+\langle y_1,y_3\rangle,\ 
\tau+\langle y_2,y_3\rangle,\ 
\tau+\langle y_1,y_4\rangle\mbox{ and }
\tau+\langle y_2,y_4\rangle
\] 
in $\Sigma$, where $\{y_1,y_2,y_3,y_4\}\subset\G(\Sigma)$. 
Let 
\[
y_1+y_2+c_3y_3+a_1x_1+\cdots+a_{d-2}x_{d-2}=0
\mbox{ and }
\]
\[
y_3+y_4+c_1y_1+e_1x_1+\cdots+e_{d-2}x_{d-2}=0
\]
be the wall relations corresponding to 
$(d-1)$-dimensional cones 
$\tau+\langle y_3\rangle$ and $\tau+\langle y_1\rangle$, 
respectively, where $a_1,\ldots,a_{d-2},
c_1,c_3,e_1,\ldots,e_{d-2}\in\mathbb{Z}$. 
Then the following holds:
\begin{prop}[{\cite[Proposition 3.6]{satosumi}}]\label{intersection}
\[
2\mathrm{ch}_2(X)\cdot S=
-c_1\left(2+c_3^2+a_1^2+\cdots+a_{d-2}^2\right)
\]
\[
+2\left(c_1+c_3+a_1e_1+\cdots+a_{d-2}e_{d-2}\right)
-c_3\left(2+c_1^2+e_1^2+\cdots+e_{d-2}^2\right).
\]
\end{prop}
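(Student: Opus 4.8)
The plan is to compute the intersection directly from the formula $\mathrm{ch}_2(X)=\frac{1}{2}\sum_{x\in\G(\Sigma)}D_x^2$, so that
\[
2\,\mathrm{ch}_2(X)\cdot S=\sum_{x\in\G(\Sigma)}D_x^2\cdot S=\sum_{x\in\G(\Sigma)}\left(D_x|_S\right)^2,
\]
where $D_x|_S$ denotes the restriction of $D_x$ to $S=\V(\tau)$. First I would observe that $D_x\cap S=\emptyset$, hence $D_x|_S=0$, unless $x$ lies in a cone containing $\tau$, i.e. unless $x\in\{x_1,\ldots,x_{d-2}\}\cup\{y_1,y_2,y_3,y_4\}$; so only these six rays contribute. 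Since $X$ is projective and $\tau$ is smooth of dimension $d-2$, the surface $S$ is a smooth complete toric surface of Picard number two, whose fan lives in the quotient lattice $\overline{N}=N/(N\cap\langle x_1,\ldots,x_{d-2}\rangle)$ and has the four rays $\overline{y}_1,\overline{y}_2,\overline{y}_3,\overline{y}_4$ (the images of the $y_i$), appearing in the cyclic order $\overline{y}_1,\overline{y}_3,\overline{y}_2,\overline{y}_4$ dictated by the four maximal cones. Thus $S$ is a Hirzebruch surface and $D_{y_i}|_S=\overline{D}_{y_i}$ is the torus-invariant curve on $S$ corresponding to $\overline{y}_i$.

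Next I would treat the two groups of rays separately. For the $y_i$, projecting the two given wall relations to $\overline{N}$ kills the $x_j$-terms and yields the surface wall relations $\overline{y}_1+\overline{y}_2+c_3\overline{y}_3=0$ and $\overline{y}_3+\overline{y}_4+c_1\overline{y}_1=0$. The standard self-intersection formula for a smooth complete toric surface then gives $\overline{D}_{y_1}^2=c_1$ and $\overline{D}_{y_3}^2=c_3$, while smoothness of the remaining maximal cone $\langle\overline{y}_2,\overline{y}_4\rangle$ forces $c_1c_3=0$ and, together with the two relations, $\overline{D}_{y_2}^2=-c_1$ and $\overline{D}_{y_4}^2=-c_3$; adjacent curves meet transversally (intersection number $1$) while the opposite pairs $(\overline{y}_1,\overline{y}_2)$ and $(\overline{y}_3,\overline{y}_4)$ are disjoint. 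In particular $\sum_{i=1}^{4}\overline{D}_{y_i}^2=0$, which matches the pure-$c$ part of the claimed formula, the latter being $-c_1c_3(c_1+c_3)=0$.

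For the rays $x_j\in\tau$ I would choose a basis of $N$ with $x_j=e_j$ for $1\le j\le d-2$; then the linear equivalence $\sum_{x}\langle e_j^{\ast},x\rangle D_x\sim0$, restricted to $S$, collapses to
\[
D_{x_j}|_S\sim-\sum_{i=1}^{4}(y_i)_j\,\overline{D}_{y_i},
\]
where $(y_i)_j$ is the $j$-th coordinate of $y_i$. Squaring this, expanding by means of the surface intersection numbers above, and substituting the two wall relations read off coordinate-wise, namely $(y_1)_j+(y_2)_j+c_3(y_3)_j=-a_j$ and $(y_3)_j+(y_4)_j+c_1(y_1)_j=-e_j$, makes all the $(y_i)_j$-dependent terms cancel and leaves $(D_{x_j}|_S)^2=-c_1a_j^2+2a_je_j-c_3e_j^2$. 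Summing over $j$ produces the $a_j,e_j$-dependent part of the formula, and adding the vanishing $y_i$-contribution gives the stated identity.

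The main obstacle is the bilinear expansion in the last step: one has to carry the indeterminate coordinates $(y_i)_j$ through the square, keep the sign convention for the surface self-intersections straight, and use $c_1c_3=0$ to discard the terms quadratic in the $c$'s, so that the $(y_i)_j$ cancel and the symmetric expression survives. Everything else is bookkeeping with the two wall relations.
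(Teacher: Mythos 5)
Your argument is correct, and there is nothing in the paper to compare it against: the proposition is quoted from \cite{satosumi} without proof. Your route --- restricting each $D_x^2$ to $S$, noting that only the six rays $x_1,\dots,x_{d-2},y_1,\dots,y_4$ contribute, reading off $\overline{D}_{y_1}^2=c_1$, $\overline{D}_{y_3}^2=c_3$, $\overline{D}_{y_2}^2=-c_1$, $\overline{D}_{y_4}^2=-c_3$ from the projected wall relations on the Hirzebruch surface $S$, and expressing $D_{x_j}|_S\sim-\sum_i(y_i)_j\overline{D}_{y_i}$ via the linear equivalences --- is the natural derivation, and the final expansion checks out: the $y$-contribution is $0$, matching the pure-$c$ part $-c_1c_3(c_1+c_3)=0$ of the stated formula, and each $x_j$ contributes exactly $-c_1a_j^2+2a_je_j-c_3e_j^2$. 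One minor imprecision: smoothness of the cone $\langle\overline{y}_2,\overline{y}_4\rangle$ alone only yields $|1-c_1c_3|=1$, i.e.\ $c_1c_3\in\{0,2\}$; to exclude $c_1c_3=2$ you need completeness of the fan of $S$ --- equivalently the classification of smooth complete toric surfaces with four rays as Hirzebruch surfaces, which you had already invoked, so the conclusion $c_1c_3=0$ stands.
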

\noindent
This formula is implemented as a script explained in Section \ref{mainsec}.
%\begin{rem}
%If $X$ is a smooth toric variety, then we can take $\alpha=b_1=b_2=b_3=b_4=1$ and 
%$a_1,\ldots,a_{d-2},b_1,b_2,b_3,b_4,
%c_1,c_3,e_1,\ldots,e_{d-2}\in\mathbb{Z}$ (see \cite{sato2}). 
%\end{rem}

\section{Pseudo-symmetric toric Fano manifolds}\label{pseudosec} %%%%%%%%%%%%%%%%%%
%%%%%%%%%%%%%%%%%%%%%%%%%%%%%%%%
%%%%%%%%%%%%%%%%5

In this section, we show the non-positivity of $\mathrm{ch}_2(\widetilde{V}^{d})$ and 
$\mathrm{ch}_2(V^{d})$, where $\widetilde{V}^d$ and $V^d$ are so-called 
{\em pseudo-symmetric} toric Fano varieties studied in \cite{ewald} and \cite{voskre1}. 
This result complements our script in Section \ref{mainsec}. 

\medskip

For $d=2n\in 2\mathbb{N}$, we define 
the $d$-dimensional smooth toric Fano varieties $\widetilde{V}^d$ and $V^d$ as follows 
(for the precise description of these varieties, please see \cite{casagrande1}): 
Let $\{e_1,\ldots,e_{2n}\}\subset N_\mathbb{R}$ be the standard basis, 
and put 
\[
x_1:=e_1,\ldots,x_{2n}:=e_{2n},x_{2n+1}:=-(e_1+\cdots+e_{2n}),
\]
\[
y_1:=-e_1,\ldots,y_{2n}:=-e_{2n},y_{2n+1}:=e_1+\cdots+e_{2n}.
\]
Then $\widetilde{V}^{d}$ is the smooth toric Fano $d$-fold $X_{\widetilde{\Sigma}}$ such that
\[
\G(\widetilde{\Sigma})=\{x_1,\ldots,x_{2n+1},y_1,\ldots,y_{2n}\}, 
\] 
while $V^{d}$ is the smooth toric Fano $d$-fold $X_{\Sigma}$ such that
\[
\G(\Sigma)=\{x_1,\ldots,x_{2n+1},y_1,\ldots,y_{2n+1}\}.
\] 

\medskip

$\widetilde{V}^{2}$ and $V^2$ are isomorphic to the del Pezzo surfaces of 
degree $7$ and $6$, respectively, which are not $\mathrm{ch}_2$-positive. 
Hence we may assume $d \geq 4$. 

\begin{thm}\label{voskre}
$\widetilde{V}^{d}$ and $V^{d}$ are {\em not} $\mathrm{ch}_2$-positive 
for any $d=2n\in2\mathbb{N}$. 
\end{thm}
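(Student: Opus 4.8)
The plan is to prove non-positivity by exhibiting, in each of $\widetilde V^d$ and $V^d$, a single torus-invariant subsurface $S$ with $\mathrm{ch}_2(X)\cdot S<0$, computed through the identity $\mathrm{ch}_2(X)=\frac12\sum_{x\in\G(\Sigma)}D_x^2$ recalled above. With $d=2n$ I would take the $(2n-2)$-dimensional cone
\[
\tau=\langle e_3,\dots,e_{n+1},-e_{n+2},\dots,-e_{2n}\rangle
\]
and set $S=\V(\tau)$. The point of this balanced choice — exactly $n-1$ of the $e_i$ and $n-1$ of the $-e_i$ — is that a sign functional $(\epsilon_1,\dots,\epsilon_{2n})$ with $n$ positive and $n$ negative entries vanishes on both $\pm(e_1+\cdots+e_{2n})$, so the corresponding orthant survives as a smooth facet and $\tau$ is genuinely a face of the fan; by contrast the naive cone $\langle e_3,\dots,e_{2n}\rangle$ fails to be a cone of $\widetilde\Sigma$ or $\Sigma$ once $n\ge3$.

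Next I would determine the rays of the toric surface $S$, i.e.\ the star of $\tau$. I would first observe that among the generators only $\pm e_1,\pm e_2$ and $\pm(e_1+\cdots+e_{2n})$ can complete $\tau$ to a larger cone (each remaining $\pm e_i$ either lies in $\tau$ or is anti-parallel to a ray of $\tau$), and then confirm by writing down explicit supporting functionals that each of these does span a wall together with $\tau$. Modulo the sublattice spanned by $\tau$ these vectors project to $\pm e_1,\pm e_2,\pm(e_1+e_2)$, so $S$ is the hexagon, i.e.\ the del Pezzo surface of degree $6$, for $V^{2n}$; since $e_1+\cdots+e_{2n}=y_{2n+1}$ is not a generator of $\widetilde\Sigma$, the ray $\overline{e_1+\cdots+e_{2n}}$ drops and $S$ is the pentagon, i.e.\ the del Pezzo surface of degree $7$, for $\widetilde V^{2n}$. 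In particular the Picard number of $S$ is $4$ (resp.\ $3$), so Proposition \ref{intersection} cannot be applied and a direct calculation is forced — consistent with the remark in the introduction that one of these families is the exceptional case for the script.

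Finally I would evaluate $\mathrm{ch}_2(X)\cdot S=\frac12\sum_x(D_x|_S)^2$ by splitting the sum over $\G(\Sigma)$. For $x$ whose image $\bar x$ is a ray of $S$ one has $D_x|_S=D_{\bar x}$, and the toric boundary self-intersections — all equal to $-1$ on the hexagon, and $0,0,-1,-1,-1$ on the pentagon — contribute $-6$ and $-3$ respectively. For a ray $x\in\tau$ I would pick an element $m$ of the dual lattice, dual to $x$ and vanishing on the other rays of $\tau$, so that $\sum_{x'\in\G(\Sigma)}\langle m,x'\rangle D_{x'}\sim0$ restricts on $S$ to $D_x|_S=-\langle m,u\rangle D_{\bar u}-\langle m,-u\rangle D_{\overline{-u}}$, where $u=e_1+\cdots+e_{2n}$ and the $D_{\bar u}$ term is absent for $\widetilde V^{2n}$. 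Since $\langle m,u\rangle=\pm1$ and $\bar u,\overline{-u}$ are non-adjacent on $S$, squaring gives $(D_x|_S)^2=-2$ for $V^{2n}$ and $-1$ for $\widetilde V^{2n}$, for each of the $2n-2$ rays of $\tau$. Summing, $\mathrm{ch}_2(V^{2n})\cdot S=\frac12\bigl(-4(n-1)-6\bigr)=-(2n+1)$ and $\mathrm{ch}_2(\widetilde V^{2n})\cdot S=\frac12\bigl(-2(n-1)-3\bigr)=-\tfrac{2n+1}{2}$, both negative (and reproducing the values $-3$ and $-\tfrac32$ at $n=1$). The main obstacle is the second step: the fans of $V^{2n}$ and $\widetilde V^{2n}$ are combinatorially intricate for $n\ge2$ — the orthants with unequally many $+$ and $-$ signs get subdivided by $\pm u$ — so the real work is pinning down the local structure around $\tau$, namely which generators complete it to a maximal cone and hence the restrictions $D_x|_S$, after which the arithmetic is routine.
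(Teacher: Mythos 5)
Your argument is correct and, for $V^d$, essentially coincides with the paper's: the authors also restrict to a balanced $(2n-2)$-dimensional cone (they take $\langle x_1,\dots,x_{n-1},y_n,\dots,y_{2n-2}\rangle$), identify $S_\tau$ with the degree-$6$ del Pezzo surface, compute $D_x^2\cdot S_\tau=-1$ for the six rays of the star and $-2$ for the $2n-2$ rays of $\tau$ via the linear equivalences $D_i-E_i-D_{2n+1}+E_{2n+1}\sim 0$ (the same relations you invoke through characters of the torus), and arrive at the same value $2\mathrm{ch}_2(V^d)\cdot S_\tau=-6-4(n-1)$. For $\widetilde V^d$ you genuinely diverge: the paper takes the \emph{unbalanced} cone $\tau=\langle e_1,\dots,e_{2n-2}\rangle$, which \emph{is} a cone of $\widetilde\Sigma$ --- it is a face of the positive orthant $\langle e_1,\dots,e_{2n}\rangle$, and that orthant is a maximal cone of $\widetilde\Sigma$ precisely because $e_1+\cdots+e_{2n}$ is not a generator there --- observes that exactly four maximal cones contain it, so $S_\tau\cong\mathbb{P}^1\times\mathbb{P}^1$ has Picard number two, and gets $\mathrm{ch}_2(\widetilde V^d)\cdot S_\tau=0$ for free from Proposition \ref{intersection}. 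So your parenthetical claim that $\langle e_3,\dots,e_{2n}\rangle$ fails to be a cone of $\widetilde\Sigma$ for $n\ge 3$ is wrong (it fails only for $\Sigma$), and this is exactly the shortcut you missed; it does not, however, affect your proof, since your balanced $\tau$ is also a legitimate cone of $\widetilde\Sigma$, your identification of its star as the pentagon is accurate, and the resulting value $\mathrm{ch}_2(\widetilde V^d)\cdot S=-(2n+1)/2$ checks out. What your route buys is uniformity (one choice of $\tau$ treats both varieties) and a strictly negative intersection number for $\widetilde V^d$ rather than zero; what the paper's route buys is that half the theorem follows from a two-line application of the Picard-number-two formula. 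The remaining burden you correctly flag --- pinning down which generators complete $\tau$ to a cone of each fan --- is real but routine, and your conclusions about the local structure (hexagon for $V^d$, pentagon for $\widetilde V^d$) are correct.
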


\begin{proof}
For $\widetilde{V}^{d}$, the Picard number of 
the torus-invariant surface $S_\tau$ 
associated to the $(d-2)$-dimensional cone 
\[
\tau:=\langle x_1,\ldots,x_{d-2}\rangle\in\widetilde{\Sigma}
\]
is two, because there exist exactly four maximal cones
\[
\tau+\langle x_{d-1},x_d\rangle,\ 
\tau+\langle x_{d-1},y_d\rangle,\ 
\tau+\langle y_{d-1},x_d\rangle\mbox{ and }
\tau+\langle y_{d-1},y_d\rangle
\]
which contain $\tau$ as a face. 
The relations 
\[
x_{d-1}+y_{d-1}=0\mbox{ and }x_d+y_d=0
\]
tell us that $S_\tau\cong\mathbb{P}^1\times\mathbb{P}^1$, and 
$\mathrm{ch}_2(\widetilde{V}^{d})\cdot S_\tau=0$ 
by Proposition \ref{intersection}. 
Therefore, $\widetilde{V}^{d}$ is {\em not} $\mathrm{ch}_2$-positive. 

\medskip

For $V^d$, there are no torus-invariant subsurfaces of Picard number two in $V^d$. 
So, we cannot apply Proposition \ref{intersection}. 
In this case, we can show the non-positivity of $\mathrm{ch}_2(V^d)$ 
by using the typical method of the calculation of intersection numbers: 
It is well-known that the maximal cones of $\Sigma$ are 
\[
\langle x_{i_1},\ldots,x_{i_n},y_{j_1},\ldots,y_{j_n} \rangle,
\]
where $1\le i_1<\cdots<i_n\le 2n+1$, $1\le j_1<\cdots<j_n\le 2n+1$ and 
$\{i_1,\ldots,i_n\}\cap\{j_1,\ldots,j_n\}=\emptyset$. 
Let $S_\tau\subset V^{d}$ be the torus-invariant subsurface 
associated to the $(2n-2)$-dimensional cone 
\[
\tau:=\langle x_1,\ldots,x_{n-1},y_{n},\ldots,y_{2n-2} \rangle.
\]
There exist exactly six maximal cones 
\[
\tau+\langle x_{2n-1},y_{2n} \rangle,\ 
\tau+\langle x_{2n-1},y_{2n+1} \rangle,\ 
\tau+\langle y_{2n-1},x_{2n} \rangle,\ 
\tau+\langle y_{2n-1},x_{2n+1} \rangle,
\]
\[
\tau+\langle x_{2n},y_{2n+1} \rangle
\mbox{ and }
\tau+\langle y_{2n},x_{2n+1} \rangle
\]
which contain $\tau$. 
Namely, $S_\tau$ is isomorphic to the del Pezzo surface $S_6$ of degree $6$. 
For $1\le i\le 2n+1$, let $D_i$ and $E_i$ be the torus-invariant prime divisors 
corresponding to $x_i$ and $y_i$, respectively. Then, we have 
relations
\[
D_i-E_i-D_{2n+1}+E_{2n+1}=0\quad (1\le i\le 2n)
\]
in $\Pic (V^{d})$. Obviously, 
\[
E_1^2\cdot S_\tau=\cdots =E_{n-1}^2\cdot S_\tau=
D_{n}^2\cdot S_\tau=\cdots=D_{2n-2}^2\cdot S_\tau=0\mbox{ and }
\]
\[
D_{2n-1}^2\cdot S_\tau=D_{2n}^2\cdot S_\tau=D_{2n+1}^2\cdot S_\tau=
E_{2n-1}^2\cdot S_\tau=E_{2n}^2\cdot S_\tau=E_{2n+1}^2\cdot S_\tau=-1.
\]
On the other hand, 
\[
D_1^2\cdot S_\tau=
(E_1+D_{2n+1}-E_{2n+1})\cdot (E_1+D_{2n+1}-E_{2n+1}) \cdot S_\tau
\]
\[
=(E_1^2+D_{2n+1}^2+E_{2n+1}^2+2E_1D_{2n+1}-2D_{2n+1}E_{2n+1}-2E_1E_{2n+1})
\cdot S_\tau
\]
\[
=0+(-1)+(-1)+2-2-2\times 0=-2.
\]
By symmetry, we have 
\[
D_i^2\cdot S_\tau=-2\mbox{ for }1\le i\le n-1,\mbox{ while }
E_j^2\cdot S_\tau=-2\mbox{ for }n\le j\le 2n-2.
\] 
Therefore, since 
\[
2\mathrm{ch}_2(V^{d})\cdot S_\tau=-6+(2n-2)\times (-2)<0, 
\]
$V^{d}$ is {\em not} $\mathrm{ch}_2$-positive. 
\end{proof}

\section{Main results}\label{mainsec} %%%%%%%%%%%%%%%%%%%%

In this section, we give a proof of Theorem \ref{thm:main}.
Our proof consists of three ingredients; the database of smooth reflexive polytopes, a script  to compute the intersection $\mathrm{ch}_{2}(X)\cdot S$ for a subsurface $S$ with Picard number two, and Theorem \ref{voskre}.

\subsection{The database of smooth reflexive polytopes}
Our classification is owed to the database of smooth reflexive lattice polytopes.
\O bro \cite{obro}  provided the algorithm to determine all smooth toric Fano $d$-folds for any $d\in\mathbb{N}$.
By using his algorithm, \O bro classified all smooth toric Fano $d$-folds for  $d \le 8$.
As for $d=9$, the classification was done by an improved implementation of the algorithm by B.~Lorentz and A.~Paffenholz \cite{paffenholz}.
As a result,  the numbers of the isomorphism classes of smooth toric Fano $d$-folds for $d\le 9$ are given as follows.
\begin{table}[h]
\begin{center}
\begin{tabular}{|c||c|c|c|c|c|c|c|c|c|}
\hline
$d$ & $1$ & $2$ & $3$ & $4$ & $5$ & $6$ & $7$ & $8$ & $9$  \\ \hline\hline
$\#$ of toric Fano $d$-folds & $1$ & $5$ & $18$ & $124$ & $866$ & $7622$ & 
$72256$ & $749892$ & $8229721$ \\ \hline
\end{tabular}
\end{center}
\end{table}\\
The data of smooth toric Fano varieties for dimensions $3$ to $9$ is given in \texttt{polymake} format on the web:
\begin{center}
\url{https://polymake.org/polytopes/paffenholz/www/fano.html}
\end{center}
We use the files named \texttt{fano-v}$k$\texttt{d.tgz} ($3 \le k \le 6$), \texttt{fano-v}$7$\texttt{d-}$\ell$\texttt{.tgz} ($0\le \ell \le 7$) and \texttt{fano-v}$8$\texttt{d-}$\ell$\texttt{.tgz} ($0\le \ell \le 74$) on the above webpage. 

\subsection{Implementation}\label{subsec:Implementation}
We implement Proposition \ref{intersection} as follows.
\begin{enumerate}
\item Obtain a list of all primitive generators of the fan $\Sigma$ of each smooth toric Fano $d$-fold from the files of the database.
\item Obtain a list of primitive generators consisting of each $(d-2)$-dimensional cone $\tau$ in $\Sigma$, then enumerate maximal cones containing $\tau$.
\item If the number of maximal cones containing $\tau$ is equal to four, then take one of them as $\sigma_{1}$.
In addition, obtain the generators of $\tau$ as $x_{1}, \ldots, x_{d-2}$, and the generators of $\sigma_{1}$ except $x_{1}, \ldots, x_{d-2}$ as $y_{1}, y_{3}$.
\item Obtain a maximal cone $\sigma_{2}$ such that $\sigma_{2}$ contains the $(d-1)$-dimensional cone $\tau +\langle y_3\rangle$ but does not contain $y_{1}$ as a generator. 
Then, we get the generator of $\sigma_{2}$ except $\{x_{1}, \ldots, x_{d-2}, y_{3}\}$ as $y_{2}$.
\item Obtain a maximal cone $\sigma_{3}$ such that $\sigma_{3}$ contains the $(d-1)$-dimensional cone $\tau +\langle y_1\rangle$ but does not contain $y_{3}$ as a generator. 
Then, we get the generator of $\sigma_{3}$ except $\{x_{1}, \ldots, x_{d-2}, y_{1}\}$ as $y_{4}$.
\item Compute the coefficients $a_1,\ldots,a_{d-2}, c_1,c_3,e_1,\ldots,e_{d-2}$ in the wall relations. 
Then, compute the intersection $\mathrm{ch}_2(X)\cdot S$ where $S$ is the subsurface corresponding to the cone $\tau$ by substituting $a_1,\ldots,a_{d-2}, c_1,c_3,e_1,\ldots,e_{d-2}$ into the formula in Proposition \ref{intersection}.
\end{enumerate}

Let us see the above implementation in each step.
One may consult the website (\url{https://polymake.org/doku.php}) for the installation of \texttt{Polymake}.
Download the files named \texttt{fano-v*d.tgz} of the database of smooth reflexive polytopes from the website as noted before, then put them on any directory.
Our script is written in Perl which is an interface language of \texttt{Polymake}.

\noindent
\\
\textbf{Step (1)}
We use the application \texttt{fan} to compute calculations on a fan.
The function \texttt{unpack\_tarball} in the script \texttt{tarball} restores the files \texttt{fano-v*d.tgz}.
We substitute it into the array \texttt{@a}.
We extract a data of a smooth reflexive polytope from \texttt{@a} and substitute it into \texttt{@Q}.
The function \texttt{polarize} induces the polar dual polytope \texttt{\$P} to \texttt{\$Q}.
The function \texttt{face\_fan} converts the polytope \texttt{\$P} into the data of the fan (named  \texttt{\$fan}).
We extract the set of generators of \texttt{\$fan} by the function \texttt{RAYS} as an array \texttt{\$rays}.
%\begin{lstlisting}[basicstyle=\ttfamily\footnotesize, frame=single]
%use application "fan";
%script("tarballs"); 
%my @a=unpack_tarball("fano-v*d.tgz"); #enter the full path of the file.
%my $Q = $a[$poly_c]; 
%my $P = polarize($Q);
%my $fan = face_fan($P);
%my $rays = $fan->RAYS;
%\end{lstlisting}
%\noindent
%Here \texttt{\$poly\_c} is a loop counter to indicate a polytope in \texttt{@a}.

\noindent
\\
\textbf{Step (2)}
The function \texttt{MAXIMAL\_CONES} is applied to \texttt{\$rays} and returns the family of the labelled set of indices of generators in \texttt{\$rays} generating a maximal cone.
The function \texttt{N\_MAXIMAL\_CONES} returns the number of maximal cones in \texttt{\$fan}.
The function \texttt{CONES->[k]} returns the family of the labelled set of indices of generators in \texttt{\$rays} generating a (\texttt{k}$+1$)-cone.
The function \texttt{incl} is to analyze the inclusion relation of given two sets. 
The value \texttt{incl (A,B)} is equal to one if \texttt{A}  contains \texttt{B}.
Hence, the value \texttt{\$link} is equal to the number of maximal cones containing a (\texttt{\$d}$-2$)-cone \texttt{\$fan->CONES->[\$d-3]->row(\$c0)} where \texttt{\$c0} is a loop counter to indicate a (\texttt{\$d}$-2$)-cone in \texttt{\$fan->CONES->[\$d-3]}.
%\begin{lstlisting}[basicstyle=\ttfamily\footnotesize, frame=single]
%while ($i < $fan->N_MAXIMAL_CONES){
%if (incl($fan->MAXIMAL_CONES->[$i], $fan->CONES->[$d-3]->row($c))==1){
%		$link++;
%	}
%	$i++;
%}
%\end{lstlisting}

\noindent
\\
\textbf{Step (3)}
If \texttt{\$link} is equal to four at \texttt{\$c0}, the corresponding subsurface $S$ has the Picard number two.
Then, we substitute their generators into an array \texttt{@X}.
Here an element in \texttt{@X} denotes the vectors $x_{i}$ as in Proposition \ref{intersection}.
In addition, since generators in \texttt{\$rays} are not necessarily primitive, we need to convert them to be primitive by the function \texttt{primitive}.
By using \texttt{incl} as in Step (2), we obtain a maximal cone \texttt{\$fan->MAXIMAL\_CONES->[\$c1]} containing \texttt{\$fan->CONES->[\$d-3]->row(\$c0)}. 
Taking the difference between \texttt{\$fan->MAXIMAL\_CONES->[\$c1]} and \texttt{\$fan->CONES->[\$d-3]->row(\$c0)}, we obtain the set \texttt{\$u} of the indices of the generator rays corresponding to $y_{1}, y_{3}$.
Then, we obtain the vectors \texttt{\$Y[0], \$Y[1]} corresponding to $y_{1}, y_{3}$ and their indices \texttt{\$y1, \$y3}.
%\begin{lstlisting}[basicstyle=\ttfamily\footnotesize, frame=single]
%my @X = cols(zero_matrix($d-2, $d));
%my @Y = cols(zero_matrix(4, $d));
%if ($link==4){
%	$v0 = new Vector<Int>($fan->CONES->[$d-3]->row($c));
%	$i=0;
%	while ($i<$d-2){
%		$X[$i]= new Vector (primitive($rays->[$v0->[$i]]));
%		$i++;
%	}
%	$i=0;
%	while ($i<$fan->N_MAXIMAL_CONES){
%		if (incl($fan->MAXIMAL_CONES->[$i], 
%		$fan->CONES->[$d-3]->row($c))==1){
%			$u = $fan->MAXIMAL_CONES->[$i]
%				-$fan->CONES->[$d-3]->row($c); 
%			$v = new Vector<Int>($u);
%			$Y[0] = new Vector(primitive($rays->[$v->[0]]));
%			$Y[1] = new Vector(primitive($rays->[$v->[1]]));
%			$y1 = $v->[0];
%			$y3 = $v->[1];
%		}
%	}
%}
%\end{lstlisting}

\noindent
\\
\textbf{Step (4) and (5)} 
Taking the set of generators of $\tau +\langle y_3\rangle$ by \texttt{\$fan->CONES->[\$d-3]->row(\$c0) +\$y3}, we repeat a similar procedure as Step (3). 

%\begin{lstlisting}[basicstyle=\ttfamily\footnotesize, frame=single]
%my $s=$fan->CONES->[$d-3]->row($c) +$y3;
%$i =0;
%while ($i<$fan->N_MAXIMAL_CONES){
%    if (incl($fan->MAXIMAL_CONES->[$i],$s)==1){
%        $u = $fan->MAXIMAL_CONES->[$i]-$s;
%        $v = new Vector<Int>($u);
%        if ($v->[0]!=$y1){
%            $Y[2] = new Vector(primitive($rays->[$v->[0]]));
%            $y2 = $v->[0];
%            $i=$fan->N_MAXIMAL_CONES;
%        } else {
%            $i++;
%        }                    
%    } else {
%        $i++;
%    }
%}
%\end{lstlisting}

\noindent
\\
\textbf{Step (6)}
First, we compute the coefficients $c_{3}, a_{1}, \ldots, a_{d-2}$ in the former of the two wall relations in Proposition \ref{intersection}.
Substituting \texttt{\$Y[0]}, \texttt{\$Y[1]} and \texttt{@X} into the array \texttt{@M}, we convert \texttt{@M} into a $d\times d$-matrix \texttt{\$mat}.
Then, we compute the coefficients as \texttt{\$coef1} by using the function \texttt{cramer (A,b)} which gives the solution of the system $A\mathbf{x}=\mathbf{b}$ by Cramer's rule.
Remark \texttt{\$coef1->[0]} is always equal to one, which corresponds to the coefficient of $y_{1}$ in the former of the two wall relations.
Moreover \texttt{\$coef1->[1]} corresponds to $c_{3}$, and \texttt{\$coef1->[k]} ($2\le$\texttt{k}$\le$\texttt{\$d-1}) corresponds to $a_{k-1}$ respectively.
%\begin{lstlisting}[basicstyle=\ttfamily\footnotesize, frame=single]
%my @M = cols(zero_matrix($d,$d));
%$M[0]=$Y[0];
%$M[1]=$Y[1];
%$i=2;
%while($i<$d){
%    $M[$i] = $X[$i-2];
%    $i++;
%}
%my $mat= transpose(new Matrix (@M));
%$coef1=cramer($mat,(-1)*$Y[2]);
%\end{lstlisting}
Similarly, we obtain the coefficients $c_{1}, e_{1}, \ldots, e_{d-2}$ in the latter of the two wall relations in Proposition \ref{intersection} as \texttt{\$coef2}.
Substituting \texttt{\$coef1} and \texttt{\$coef2} into the formula in Proposition \ref{intersection}, we obtain the intersection $2\mathrm{ch_{2}}(X) \cdot S$ as \texttt{\$intersection}.
%\begin{lstlisting}[basicstyle=\ttfamily\footnotesize, frame=single]
%$i=2;
%while ($i<$d){
%    $square1 += ($coef1->[$i])*($coef1->[$i]);
%    $square2 += ($coef2->[$i])*($coef2->[$i]);
%    $cross += ($coef1->[$i])*($coef2->[$i]);
%    $i++;
%}
%$intersection = 
%	- $coef2->[1]*(2+$coef1->[1]*$coef1->[1]+$square1)
%	+2*($coef1->[1]+$coef2->[1]+$cross)
%	-$coef1->[1]*(2+$coef2->[1]*$coef2->[1]+$square2);
%\end{lstlisting}
\\

See a practical script to determine whether $X$ is $\mathrm{ch}_2$-positive or not in the last of this section.

\subsection{Results and conjectures}
By using our script, we find the following results.
\begin{prop}\label{prop:d=odd}
For any smooth toric Fano $d$-fold $X$ of $d=5,\,7$ and $\rho(X) \ge 2$, there exists a torus-invariant surface $S\subset X$ such that $\rho(S)=2$ and $\mathrm{ch}_2(X)\cdot S\le 0$.
In particular, $X$ is not $\mathrm{ch}_2$-positive.
\end{prop}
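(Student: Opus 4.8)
The plan is to verify the statement computationally by running the script of Section~\ref{mainsec} over the complete database of smooth reflexive polytopes in dimensions $5$ and $7$. By \O bro's classification there are exactly $866$ smooth toric Fano $5$-folds and $72256$ smooth toric Fano $7$-folds, so for each fixed odd $d\in\{5,7\}$ the family of candidates is finite and explicitly available. For each $X=X_\Sigma$ in the database with $\rho(X)\ge 2$ (the case $\rho(X)=1$ being $\mathbb{P}^d$ itself), I would enumerate the $(d-2)$-dimensional cones $\tau\in\Sigma$ and, following Steps (2)--(3) of Section~\ref{subsec:Implementation}, select those for which exactly four maximal cones contain $\tau$; these are precisely the cones whose associated torus-invariant surface $S_\tau$ satisfies $\rho(S_\tau)=2$.

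For each such $\tau$ I would read off the wall-relation coefficients $c_1,c_3,a_1,\ldots,a_{d-2},e_1,\ldots,e_{d-2}$ as in Steps (4)--(6) and substitute them into the formula of Proposition~\ref{intersection} to obtain $2\,\mathrm{ch}_2(X)\cdot S_\tau$. The assertion to be checked for every $X$ with $\rho(X)\ge 2$ is that at least one of these values is $\le 0$; the final ``in particular'' clause is then immediate, since $\mathrm{ch}_2$-positivity requires strict positivity against every subsurface, so a single surface with $\mathrm{ch}_2(X)\cdot S_\tau\le 0$ already rules it out.

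The one structural point that makes this uniform for $d=5,7$ is that these dimensions are odd. The only obstruction to applying Proposition~\ref{intersection} is the absence of any torus-invariant surface of Picard number two, which by the proof of Theorem~\ref{voskre} occurs for the pseudo-symmetric variety $V^d$; but $V^d$ is defined only in even dimension $d=2n$, so no such exceptional case can arise here. Thus the main thing to confirm --- and the step I expect to require the most care --- is precisely that every $X$ in the odd-dimensional database with $\rho(X)\ge 2$ possesses at least one $(d-2)$-cone contained in exactly four maximal cones, so that the script never fails to return a testable surface. Once the enumeration guarantees such a surface for each $X$, the problem reduces to a finite check over the database, and the appearance of some nonpositive intersection number in each case yields the claim.
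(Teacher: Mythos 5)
Your proposal is correct and is essentially identical to the paper's argument: the paper establishes Proposition \ref{prop:d=odd} precisely by running the script of Section \ref{mainsec} over \O bro's complete database of smooth toric Fano $5$- and $7$-folds and observing that every $X$ with $\rho(X)\ge 2$ yields a torus-invariant surface of Picard number two with non-positive intersection against $\mathrm{ch}_2(X)$. Your observation that the exceptional case $V^d$ exists only in even dimension matches the paper's treatment, where the even-dimensional analogue (Proposition \ref{prop:d=even}) must exclude $V^d$ while the odd-dimensional statement needs no exception.
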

As for $d=4,\,6,\,8$, there exist the exceptional cases we cannot apply our script to.
\begin{prop}\label{prop:d=even}
For any smooth toric Fano $d$-fold $X$ of $d=4,\,6,\, 8$ and $\rho(X) \ge 2$ except for $V^d$, there exists a torus-invariant surface $S\subset X$ such that $\rho(S)=2$ and $\mathrm{ch}_2(X)\cdot S\le 0$.
\end{prop}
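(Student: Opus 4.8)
The plan is to establish the statement by a finite computation over the complete database of smooth toric Fano $d$-folds for $d=4,6,8$, running the script that implements Proposition \ref{intersection}. This is the even-dimensional counterpart of Proposition \ref{prop:d=odd}, and the appearance of the exception $V^d$ is expected precisely because the pseudo-symmetric variety $V^d$ exists only in even dimension $d=2n$. Since Proposition \ref{intersection} returns $\mathrm{ch}_2(X)\cdot S$ for every torus-invariant subsurface $S$ of Picard number two, the whole argument reduces to a search: for each $X$ in the database with $\rho(X)\ge 2$, exhibit a single $(d-2)$-dimensional cone $\tau\in\Sigma$ whose surface $S_\tau$ has $\rho(S_\tau)=2$ and $\mathrm{ch}_2(X)\cdot S_\tau\le 0$.

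Concretely, I would load the files \texttt{fano-v}$k$\texttt{d} for $d=4,6,8$ and discard the unique variety with $\rho=1$, namely $\mathbb{P}^d$, so that only the varieties with $\rho(X)\ge 2$ remain. For each such $X=X_\Sigma$ I would enumerate its $(d-2)$-dimensional cones $\tau$ and, as in Steps (2) and (3) of Section \ref{subsec:Implementation}, count the maximal cones containing each $\tau$; those contained in exactly four maximal cones are the ones with $\rho(S_\tau)=2$. For each admissible $\tau$ I would extract $y_1,y_2,y_3,y_4$ and $x_1,\ldots,x_{d-2}$, solve the two wall relations by Cramer's rule to obtain $c_1,c_3,a_i,e_i$, and feed them into the formula of Proposition \ref{intersection}.

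The crucial step is the bookkeeping of the output. As soon as one admissible $\tau$ gives $\mathrm{ch}_2(X)\cdot S_\tau\le 0$, the variety $X$ satisfies the conclusion and can be discarded; otherwise it is flagged. I expect the search to show that the only flagged varieties are $V^4$, $V^6$ and $V^8$, and that they are flagged for the strongest reason --- they possess no torus-invariant subsurface of Picard number two at all, exactly as in the proof of Theorem \ref{voskre}. Confirming the statement therefore means verifying two things: that every non-flagged variety indeed produces a good $\tau$, and that no variety other than $V^d$ is flagged, so that the clause ``except for $V^d$'' is both correct and tight.

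The main obstacle is computational scale rather than anything conceptual: the dimension-eight list alone contains $749892$ varieties, so the enumeration of $(d-2)$-cones together with the per-cone linear algebra must be organized and executed to completion without error. A secondary point is to pin down the exceptions rigorously --- I would match the primitive generators of each flagged variety against $\{x_1,\ldots,x_{2n+1},y_1,\ldots,y_{2n+1}\}$ to certify that it is isomorphic to $V^d$, which both justifies the exclusion in the statement and lets Theorem \ref{voskre} dispose of $V^d$ in the proof of the main theorem.
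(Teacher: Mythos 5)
Your proposal matches the paper's approach exactly: the proposition is established by running the \texttt{polymake} script of Subsection \ref{subsec:Implementation} over the complete \O bro--Paffenholz database for $d=4,6,8$, finding a Picard-number-two surface with $\mathrm{ch}_2(X)\cdot S\le 0$ for every $X$ with $\rho(X)\ge 2$ except $V^d$, which (as the paper notes in the proof of Theorem \ref{voskre}) has no torus-invariant subsurface of Picard number two at all and is handled there separately. Your extra step of certifying the flagged varieties as $V^d$ by matching generators is a sensible explicit check of what the paper asserts implicitly.
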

Combining this proposition with Theorem \ref{voskre}, it is proved that any smooth toric Fano $d$-fold $X$ of $d=4,\,6,\, 8$ and $\rho(X) \ge 2$ is not $\mathrm{ch}_2$-positive.
With Proposition \ref{prop:d=odd}, Proposition \ref{prop:d=even} and the known results for $d=1,2,3$, we can conclude Theorem \ref{thm:main}.

The lists of our main results are available on the web:
\begin{center}
\url{https://sites.google.com/a/fukuoka-u.ac.jp/satoric/toricfano_ch2}
\end{center}
In the lists, we explicitly describe a surface $S\subset X$ such that 
$\mathrm{ch}_2(X)\cdot S\le 0$ for any smooth toric Fano $d$-fold 
$X$ of $5\le d\le 7$ 
except for $\mathbb{P}^5$, $\mathbb{P}^6$, $V^6$ and $\mathbb{P}^7$. 

Thus, we end this subsection by proposing the following two conjectures:
\begin{conj}\label{conj1}
Let $X$ be a smooth toric Fano $d$-fold. If $X$ is $\mathrm{ch}_2$-positive, 
then $X$ is isomorphic to the $d$-dimensional projective space $\mathbb{P}^d$. 
\end{conj}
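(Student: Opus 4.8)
The plan is to leverage the complete classification of smooth toric Fano $d$-folds for $d \le 8$ together with the intersection formula of Proposition \ref{intersection}. The first step is to dispose of the Picard number one case: a smooth complete toric variety with $\rho(X) = 1$ has exactly $d+1$ rays whose primitive generators sum to zero, so its fan is the fan of $\mathbb{P}^d$ and $X \cong \mathbb{P}^d$. Moreover $\mathbb{P}^d$ is genuinely $\mathrm{ch}_2$-positive, since $\mathrm{ch}_2(\mathbb{P}^d) = \frac{1}{2}(d+1)H^2$ pairs positively with every surface class against the ample generator $H$. It therefore suffices to show that every smooth toric Fano $d$-fold with $\rho(X) \ge 2$ and $d \le 8$ fails to be $\mathrm{ch}_2$-positive.

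For this main reduction I would use the fact that \O bro and Paffenholz have sorted all such varieties for $d \le 8$ into finitely many isomorphism classes, all available in polymake-readable form. The approach is then exhaustive: for each $X$ with $\rho(X) \ge 2$, I would search for a torus-invariant subsurface $S$ of Picard number two and evaluate $\mathrm{ch}_2(X) \cdot S$ by Proposition \ref{intersection}. Such a surface corresponds to a $(d-2)$-dimensional cone $\tau$ lying in exactly four maximal cones, and the integers $a_i, c_1, c_3, e_i$ feeding the formula are read off from the two associated wall relations. As soon as one admissible $S$ gives $\mathrm{ch}_2(X) \cdot S \le 0$, the variety $X$ is ruled out. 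The bulk of the labor is to implement this search as a script and run it across the entire database; its output is recorded as Proposition \ref{prop:d=odd} for $d = 5, 7$ and Proposition \ref{prop:d=even} for $d = 4, 6, 8$.

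The main obstacle is that certain varieties lie entirely outside the reach of Proposition \ref{intersection}, because they contain \emph{no} torus-invariant subsurface of Picard number two at all; the script then returns nothing and cannot eliminate them. As shown in Section \ref{pseudosec}, the pseudo-symmetric varieties $V^d$ (defined for even $d = 2n$) are exactly of this type. These require a separate, direct computation, which is the content of Theorem \ref{voskre}: one selects a torus-invariant surface $S_\tau$ isomorphic to a degree six del Pezzo surface (necessarily of Picard number four, hence inadmissible for Proposition \ref{intersection}), computes the self-intersections $D_i^2 \cdot S_\tau$ from the linear relations in $\Pic(V^d)$, and arrives at $2\mathrm{ch}_2(V^d) \cdot S_\tau = -6 + (2n-2)(-2) < 0$.

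Finally I would assemble the pieces. Propositions \ref{prop:d=odd} and \ref{prop:d=even}, together with Theorem \ref{voskre} covering the omitted $V^d$, establish that no smooth toric Fano $d$-fold with $\rho(X) \ge 2$ and $4 \le d \le 8$ is $\mathrm{ch}_2$-positive, while the cases $d = 1, 2, 3$ follow from the classical low-dimensional classification. Combined with the Picard number one analysis, this forces any $\mathrm{ch}_2$-positive $X$ with $d \le 8$ to satisfy $\rho(X) = 1$, whence $X \cong \mathbb{P}^d$.
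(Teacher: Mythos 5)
The statement you were asked to prove is Conjecture \ref{conj1}, which asserts the result for smooth toric Fano $d$-folds of \emph{arbitrary} dimension $d$. The paper does not prove this statement — it is posed as an open conjecture — and what your write-up reconstructs is, quite faithfully, the paper's proof of Theorem \ref{thm:main}, i.e.\ the restriction to $d \le 8$: reduce to $\rho(X) \ge 2$ (any smooth complete toric variety of Picard number one is $\mathbb{P}^d$, and $\mathrm{ch}_2(\mathbb{P}^d) = \frac{d+1}{2}H^2$ is positive on surfaces), sweep the \O bro--Paffenholz database with the wall-relation formula of Proposition \ref{intersection} (yielding Propositions \ref{prop:d=odd} and \ref{prop:d=even}), and handle $V^d$ separately by the direct del Pezzo computation of Theorem \ref{voskre}. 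All of those steps are sound, and they match the paper's argument for the $d\le 8$ theorem essentially step for step.

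The genuine gap is that this strategy cannot prove the conjecture as stated, because it is intrinsically tied to a finite classification: complete lists of smooth toric Fano $d$-folds exist only for $d \le 9$, so the exhaustive search terminates at those dimensions and says nothing beyond them. Your proposal contains no dimension-free mechanism — for instance, no structural proof that every smooth toric Fano $X$ with $\rho(X) \ge 2$ other than $V^d$ admits a torus-invariant surface $S$ of Picard number two with $\mathrm{ch}_2(X)\cdot S \le 0$; that statement is precisely Conjecture \ref{conj2}, which the paper also leaves open while noting (together with Theorem \ref{voskre}) that it would imply Conjecture \ref{conj1}. A smaller inaccuracy: you assert that the $V^d$ are \emph{exactly} the varieties containing no torus-invariant subsurface of Picard number two; the paper only establishes that $V^d$ has none and that, within the database for $d = 4, 6, 8$, every other variety of Picard number at least two admits such a surface with nonpositive intersection — no such characterization is claimed in general. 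So what you have written is a valid proof of Theorem \ref{thm:main}, not of Conjecture \ref{conj1}; to the conjecture itself it contributes only the finitely many dimensions the paper already covers.
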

\begin{conj}\label{conj2}
Let $X$ be a smooth toric Fano $d$-fold. If $X$ is 
isomorphic to neither $\mathbb{P}^d$ nor $V^d$, 
then there exists a torus invariant subsurface $S\subset X$ 
of Picard number two such that 
$\mathrm{ch}_2(X)\cdot S\le 0$. %In particular, 
%if $X$ is $\mathrm{ch}_2$-positive, 
%then $X$ is isomorphic to the $d$-dimensional projective space $\mathbb{P}^d$. 
\end{conj}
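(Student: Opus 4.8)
The plan is to treat this as a classification statement across all dimensions, organized by the Picard number $\rho(X)$, and to split it into a combinatorial existence problem and a sign problem. Since the only smooth complete toric $d$-fold with $\rho(X)=1$ is $\mathbb{P}^d$, all of whose torus-invariant subsurfaces have Picard number one, while the proof of Theorem \ref{voskre} shows $V^d$ likewise has no torus-invariant subsurface of Picard number two, both excluded varieties share the feature of lacking a torus-invariant subsurface of Picard number two. The conjecture therefore amounts to two assertions: first, that $\mathbb{P}^d$ and $V^d$ are the only smooth toric Fano manifolds lacking a $(d-2)$-dimensional cone $\tau\in\Sigma$ whose star consists of exactly four maximal cones; and second, that whenever such a cone exists one may choose it so that $\mathrm{ch}_2(X)\cdot S_\tau\le 0$. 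The computer-assisted Propositions \ref{prop:d=odd} and \ref{prop:d=even} verify both assertions for $d\le 8$, so the task is to make them uniform in $d$.

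For the existence part I would work with Batyrev's primitive collections and relations. A $(d-2)$-cone with a four-cone star is equivalent to a pair of wall relations sharing the common face $\tau$, and such a configuration is produced by two \emph{independent} primitive relations of length two, each of which encodes a $\mathbb{P}^1$-fibration-type direction of $X$. The goal would be a structure theorem asserting that a smooth toric Fano manifold with $\rho(X)\ge 2$ always carries two such relations meeting in a $(d-2)$-face, unless it is forced to be $V^d$; here one would invoke the known structure theory of toric Fano manifolds with extremal contractions together with an induction on $\rho(X)$ along contractions $X\to X'$, tracking how the combinatorics of four-cone stars behaves under blow-downs and fibrations.

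For the sign part, the surface $S_\tau$ is automatically a smooth complete toric surface of Picard number two, hence a Hirzebruch surface (with $\mathbb{P}^1\times\mathbb{P}^1$ as the balanced case that already gave $\mathrm{ch}_2\cdot S=0$ for $\widetilde{V}^d$ in Theorem \ref{voskre}). One would feed the wall-relation coefficients into Proposition \ref{intersection} and analyze the sign of $-c_1(2+c_3^2+\sum a_i^2)+2(c_1+c_3+\sum a_ie_i)-c_3(2+c_1^2+\sum e_i^2)$ subject to the integrality and boundedness forced by the Fano condition. The decisive obstacle, I expect, is not any single sign computation but the uniformity: ruling out a hypothetical ``$V^d$-like'' manifold in which every Picard-two subsurface happens to give a strictly positive intersection, and establishing the combinatorial uniqueness of $V^d$ in all dimensions without recourse to the finite database. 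A resolution of the existence part would in particular characterize $V^d$ among toric Fano manifolds, and together with Theorem \ref{voskre} it would yield Conjecture \ref{conj1}, since any $\mathrm{ch}_2$-positive toric Fano manifold other than $\mathbb{P}^d$ would have to be $V^d$, which is not $\mathrm{ch}_2$-positive.
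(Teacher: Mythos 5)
This statement is labelled a \emph{conjecture} in the paper, and the paper does not prove it: the authors establish it only for $d\le 8$ by exhaustively running the \texttt{polymake} script over \O bro's and Paffenholz's database (Propositions \ref{prop:d=odd} and \ref{prop:d=even}), and they explicitly leave the general-$d$ case open. Your proposal likewise does not constitute a proof. What you have written is a plausible research programme --- split the claim into (i) a combinatorial assertion that $\mathbb{P}^d$ and $V^d$ are the only smooth toric Fano $d$-folds with no $(d-2)$-dimensional cone whose star has exactly four maximal cones, and (ii) a sign assertion that some such cone can always be chosen with $\mathrm{ch}_2(X)\cdot S_\tau\le 0$ --- but neither (i) nor (ii) is established. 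The ``structure theorem'' you invoke for (i) via primitive collections and induction on $\rho(X)$ along contractions is precisely the missing content, and you concede as much when you identify the ``uniformity'' and the ``combinatorial uniqueness of $V^d$ in all dimensions'' as the decisive obstacles. Similarly, for (ii) you describe feeding the wall-relation coefficients into Proposition \ref{intersection} and ``analyzing the sign,'' but no bound on the coefficients $a_i$, $c_1$, $c_3$, $e_i$ coming from the Fano condition is actually derived, and no argument is given that rules out a hypothetical manifold all of whose Picard-number-two subsurfaces give strictly positive intersection.

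Two smaller cautions. First, your reduction of a four-cone star to ``two independent primitive relations of length two'' is not accurate in general: the wall relation over $\tau+\langle y_3\rangle$ has the form $y_1+y_2+c_3y_3+\sum a_ix_i=0$ with $c_3$ and the $a_i$ arbitrary integers, so the pair $\{y_1,y_2\}$ need not form a primitive collection with a length-two relation; the correspondence you want is more delicate. Second, your framing is internally consistent with the paper's remark that Conjecture \ref{conj2} implies Conjecture \ref{conj1} via Theorem \ref{voskre}, and your observations about $\rho(X)=1$ and about $V^d$ having no Picard-number-two torus-invariant subsurface agree with the paper. But as it stands the proposal reproduces the paper's conjectural statement and its known consequences without supplying the new argument that would be needed to prove it.
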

\begin{rem}
Obviously, Conjecture \ref{conj2} implies Conjecture \ref{conj1} by Theorem \ref{voskre}.
\end{rem}

\subsection{Script}
In this subsection, we build the scripts explained in Subsection \ref{subsec:Implementation} into  a practical script to determine whether $X$ is $\mathrm{ch}_2$-positive or not.
The following script returns a message ``\texttt{not ch\_2-positive}'' if $X$ admits a surface $S\subset X$ such that $\rho(S)=2$ and $\mathrm{ch}_2(X)\cdot S\le 0$.
\begin{lstlisting}[basicstyle=\ttfamily\footnotesize, frame=single]
use strict;
use warnings;
use utf8;
use application "fan";
binmode STDIN, ':encoding(cp932)';
binmode STDOUT, ':encoding(cp932)';
binmode STDERR, ':encoding(cp932)';

&ch2positive("fano-v*d.tgz"); #enter the full path of the file.

sub ch2positive {
    script("tarballs"); 
    my @a=unpack_tarball($_[0]);  
    my $d=*; 	#input the dimension of manifolds
    my $c0=0;	#loop counter
    my $c1=0;	#loop counter
    my $poly_c=0;	#loop counter for polytopes

    my $link=0;	#number of maximal cones 
    				containing a cone of codimension two
    my @X = cols(zero_matrix($d-2, $d)); #array for the vectors x_i
    my @Y = cols(zero_matrix(4, $d));	#array for the vector y_i
    my $y1; #index of vector y_1 in $v1
    my $y2; #index of vector y_2 in $v1
    my $y3; #index of vector y_3 in $v1
    my $y4; #index of vector y_4 in $v1
    my $coef1; #coefficients in the first wall relation
    my $coef2; #coefficients in the second wall relation
    my $square1=0; 
    	#square sum of x_i in the first wall relation
    my $square2=0;
    	#square sum of the coefficients in the second wall relation
    my $cross=0; #inner product of $coef1 and $coef2
    my $intersection=0; #intersection number of ch_2(X) and S
    my $pc0=0; #counter for the number of surfaces 
			with non-positive intersection number

    my $v0; #vector whose elements are indices of 1-cones
    my $v1; #vector whose elements are indices of 1-cones
    my $u0; #set of indices of 1-cones
    my $set; #set of indices of 1-cones
    my @M = cols(zero_matrix($d,$d)); #matrix consisting of @X and @Y

#Step (1)

    while($poly_c < $#a+1){		
    	   print $_[0];
	   print "_";
	   print $poly_c;
	   print "\n";
 	   my $Q = $a[$poly_c];  
           my $P = polarize($Q);    
	   my $fan = face_fan($P);   
	   my $rays = $fan->RAYS;
	   my $max_cones = $fan->MAXIMAL_CONES;
           my $N_max_cones = $fan->N_MAXIMAL_CONES;
  
#Step (2)

	  $c0=0;
   	  $pc0=0;
          while ($c0 < $fan->F_VECTOR->[$d-3]){
          	$c1=0;
        	$link=0;
        	my $ind_surface = $fan->CONES->[$d-3]->row($c0);
  	        while ($c1 < $N_max_cones){	
            		if (incl($max_cones->[$c1], $ind_surface)==1){
                		$link++;
            		}
            		$c1++;
        	}

        if ($link==4){
            $square1 =0;
            $square2 =0;
            $cross =0;

            $c1=0;
            $v0 = new Vector<Int>($ind_surface);
            while ($c1<$d-2){
                $X[$c1]= new Vector (primitive($rays->[$v0->[$c1]]));
                $c1++;
            }

#Step (3)

            $c1=0;
            while ($c1<$N_max_cones){		
                if (incl($max_cones->[$c1], $ind_surface)==1){
                    $u0 = $max_cones->[$c1] - $ind_surface; 
                    $v1 = new Vector<Int>($u0);
                    $Y[0] = new Vector(primitive($rays->[$v1->[0]]));
                    $Y[1] = new Vector(primitive($rays->[$v1->[1]]));
                    $y1 = $v1->[0];
                    $y3 = $v1->[1];
                    $c1=$N_max_cones;
                } else{
                        $c1++;
                }
            }               

#Step (4)

            $c1=0;
            $set = $ind_surface +$y3;
            while ($c1<$N_max_cones){		
                if (incl($max_cones->[$c1], $set)==1){
                    $u0 = $max_cones->[$c1] - $set;
                    $v1 = new Vector<Int>($u0);
                    if ($v1->[0]!=$y1){
                        $Y[2] = new Vector(primitive($rays->[$v1->[0]]));
                        $y2 = $v1->[0];
                        $c1=$N_max_cones;
                    } else {
                        $c1++;
                    }                    
                } else {
                    $c1++;
                }
            }
            
#Step (5)

           $c1=0;
            $set = $ind_surface +$y1;
            while ($c1<$N_max_cones){		
                if (incl($max_cones->[$c1], $set)==1){
                    $u0 = $max_cones->[$c1] - $set;
                    $v1 = new Vector<Int>($u0);
                    if ($v1->[0]!=$y3){
                        $Y[3] = new Vector(primitive($rays->[$v1->[0]]));
                        $y4 = $v1->[0];
                        $c1=$N_max_cones;
                    } else {
                        $c1++;
                    }                    
                } else {
                    $c1++;
                }
            }
            
#Step(6)
            
            $M[0]=$Y[0];
            $M[1]=$Y[1];
            $c1=2;
            while($c1<$d){
                $M[$c1] = $X[$c1-2];
                $c1++;
            }
            my $mat= transpose(new Matrix (@M));
            $coef1=cramer($mat,(-1)*$Y[2]);

            $M[0]=$Y[1];
            $M[1]=$Y[0];
            $mat= transpose(new Matrix (@M));
            $coef2=cramer($mat,(-1)*$Y[3]);

            $c1=2;
            while ($c1<$d){
                $square1 += ($coef1->[$c1])*($coef1->[$c1]);
                $square2 += ($coef2->[$c1])*($coef2->[$c1]);
                $cross += ($coef1->[$c1])*($coef2->[$c1]);
                $c1++;
            }

            $intersection = 
            - $coef2->[1]*(2+$coef1->[1]*$coef1->[1]+$square1)
            +2*($coef1->[1]+$coef2->[1]+$cross)
            -$coef1->[1]*(2+$coef2->[1]*$coef2->[1]+$square2);

            if ($intersection<=0){
                $pc0++;
                $c0 = $fan->F_VECTOR->[$d-3];
                last;
            }
        }
        $c0++;
    }
    if ($pc0!=0){
        print "not ch_2-positive";
        print "\n\n";
    } else {
        print "cannot determine via surfaces of Picard number two";
        print "\n\n";
    }
    $poly_c++;
    }
}
\end{lstlisting}
\begin{rem}
The function \texttt{F\_VECTOR->[k]} returns the number of (\texttt{k}$+1$)-dimensional cones in a given fan.
\end{rem}

%%%%%%%%%%%%%%%%%%%%%%%%%%%%%%%%%
%%%%%%%%%%%%%%%%%%%%%%%%%%%%%%

\end{document}